\newtheorem{theorem}{Theorem}[section]
\newtheorem{conjecture}[theorem]{Conjecture}
\begin{document}

\onehalfspace
%\linenumbers

\title{Approximately locating an invisible agent\\ in a graph with relative distance queries}

\author{Dennis Dayanikli \and Dieter Rautenbach}

\date{}

\maketitle

\begin{center}
Institut f\"{u}r Optimierung und Operations Research, 
Universit\"{a}t Ulm, Ulm, Germany,
\{\texttt{dennis.dayanikli,dieter.rautenbach}\}\texttt{@uni-ulm.de}\\[3mm]
\end{center}

\begin{abstract}
In a pursuit evasion game on a finite, simple, undirected, and connected graph $G$,
a first player visits vertices $m_1,m_2,\ldots$ of $G$,
where $m_{i+1}$ is in the closed neighborhood of $m_i$ for every $i$,
and a second player probes arbitrary vertices $c_1,c_2,\ldots$ of $G$, and learns whether or not 
the distance between $c_{i+1}$ and $m_{i+1}$ is at most 
the distance between $c_i$ and $m_i$.
Up to what distance $d$ can the second player determine the position of the first? For trees of bounded maximum degree and grids, we show that $d$ is bounded by a constant.
We conjecture that $d=O(\log n)$ for every graph $G$ of order $n$, and show that $d=0$ if $m_{i+1}$ may differ from $m_i$ only if $i$ is a multiple of some sufficiently large integer.
\end{abstract}

{\small 
\begin{tabular}{lp{13cm}}
{\bf Keywords:} pursuit and evasion game
\end{tabular}
}

\section{Introduction}

We study a variant of pursuit and evasion games formalized and studied by 
Britnell and Wildon \cite{brwi}, 
Komarov and Winkler \cite{kowi},
Haslegrave \cite{ha},
Seager \cite{se}, and
Rautenbach and Schneider \cite{rs}.
In these games, further studied in \cite{brdierlemo,cachdeerwe,hejoko,s2},
one player tries to catch or locate a second player moving along the edges of a graph,
using information concerning the current position of the second player.
Our game is also played by two players on a 
finite, simple, undirected, and connected graph $G$ known to both of them, 
and proceeds in discrete time steps numbered by positive integers.
One player, called the {\it mouse}, moves along the edges of $G$.
At time $i$, the mouse occupies some vertex $m_i$ of $G$, 
and, if $i$ is at least $2$, then $m_i$ is either $m_{i-1}$ or a neighbor of $m_{i-1}$,
that is, $m_i$ belongs to the closed neighborhood $N_G[m_{i-1}]$ of $m_{i-1}$ in $G$,
and the mouse can be considered to move with unit speed.
The second player, called the {\it cat}, probes vertices of $G$ one by one in the same discrete time steps.
At time $i$, the cat probes some vertex $c_i$ of $G$, 
where $c_i$ can be chosen without any restriction within the vertex set $V(G)$ of $G$.

The essential difference of our game, as compared to those mentioned above, 
consists in the information provided to the cat.
If $i$ is at least $2$, then, after $m_i$ and $c_i$ have been decided by the two players,
the cat learns whether 
\begin{itemize}
\item $d_i\leq d_{i-1}$ or 
\item $d_i>d_{i-1}$,
\end{itemize}
where $d_i$ denotes the distance ${\rm dist}_G(c_i,m_i)$ in $G$ between $c_i$ and $m_i$.
The goal of the cat is to locate the mouse as precisely as possible,
while the goal of the mouse is to hinder being well located. 
To make this more precise, we introduce some further terminology.

A {\it game $g$} on $G$ is a pair of sequences $((m_i)_{i\in \mathbb{N}},(c_i)_{i\in \mathbb{N}})$
of possible moves $m_i$ for the mouse, and $c_i$ for the cat.
For such a game $g$,
and an integer $i$ at least $2$, let 
$$b_i=
\begin{cases}
1 & \mbox{, if $d_i\leq d_{i-1}$, and}\\
0 & \mbox{, if $d_i>d_{i-1}$,}
\end{cases}$$
that is, the information available to the cat for its choice of $c_{i+1}$ consists of $G$ and the $i-1$ {\it bits} $b_2,\ldots,b_i$.
Note that the cat chooses $c_1$ and $c_2$ without any information about the whereabouts of the mouse.
Based on the available information, 
the cat knows that $m_i$ belongs to the set $M_i$,
where $M_i$ is the set of all vertices $u$ of $G$ 
such that there are vertices $\tilde{m}_1,\ldots,\tilde{m}_i$ of $G$ with 
\begin{itemize}
\item $\tilde{m}_i=u$,
\item $\tilde{m}_j\in N_G[\tilde{m}_{j-1}]$ for every $j\in [i]\setminus \{ 1\}$, and 
\item ${\rm dist}_G(c_j,\tilde{m}_j)\leq {\rm dist}_G(c_{j-1},\tilde{m}_{j-1})$ if and only if $b_j=1$ for every $j\in [i]\setminus \{ 1\}$,
\end{itemize}
where $[i]$ denotes the set of positive integers at most $i$.

If the {\it radius} ${\rm rad}_G(M)$ of a set $M$ of vertices of $G$ is defined as 
$$\min\Big\{ \max\Big\{ {\rm dist}_G(u,m):m\in M\Big\}:u\in V(G)\Big\},$$
then the cat wants to minimize the radius of $M_i$.
Note that the vertex $u$ in this definition may not belong to $M$. 

We say that the cat follows a {\it strategy} $(c_1,c_2;f)$
in the game $((m_i)_{i\in \mathbb{N}},(c_i)_{i\in \mathbb{N}})$ on $G$
if $c_1$ and $c_2$ are vertices of $G$, and 
$$f:\bigcup_{i\in \mathbb{N}}\{ 0,1\}^i \to V(G)$$
is a function
such that $c_1$ and $c_2$ are the two first vertices probed by the cat, and 
$c_{i+1}=f(b_2,\ldots,b_i)$ for every integer $i$ at least $2$.
Furthermore, we say that {\it the cat can localize the mouse up to distance $d$ within time $t$ on $G$}
if there is some strategy $\sigma$ 
such that for every game $((m_i)_{i\in \mathbb{N}},(c_i)_{i\in \mathbb{N}})$ on $G$
in which the cat follows the strategy $\sigma$,
there is some positive integer $i$ at most $t$ with ${\rm rad}_G(M_i)\leq d$.

While the cat only knows $G$ and the $b_i$,
and therefore also the set $M_i$,
we may assume that the mouse knows $G$ and also any strategy followed by the cat.
Note that we consider a game to be infinite, 
and that we did not specify any winning conditions. 
A reasonable way to do so 
is to fix a distance threshold $d$, 
and to declare the cat to be the winner on the pair $(G,d)$
if it can locate the mouse up to distance $d$ within finite time.

A natural question concerning our game is how precisely the cat can localize the mouse on a given graph.
Our first result provides an answer for trees of bounded maximum degree.

\begin{theorem}\label{theoremtree}
The cat can localize the mouse up to distance $4\Delta-6$ within time $O(h\Delta)$ on every tree $T$ 
of maximum degree $\Delta$ at least $2$ and radius $h$.
\end{theorem}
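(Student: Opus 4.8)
\medskip

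\noindent\emph{Proof strategy (plan).}
The plan is to root $T$ at a center vertex $r$, so that every vertex has depth at most $h$, and to let the cat \emph{descend} the rooted tree while keeping the invariant that the current candidate set $M_i$ is contained in the subtree $T_v$ rooted at a current \emph{anchor} $v$. The starting anchor is $r$ itself, for which $M_1\subseteq V(T)=T_r$ holds trivially, and each successful descent replaces $v$ by a child of $v$, strictly increasing ${\rm depth}(v)$. Since ${\rm dist}_T(v,x)={\rm depth}(x)-{\rm depth}(v)\le h-{\rm depth}(v)$ for every $x\in T_v$, the invariant yields ${\rm rad}_G(M_i)\le h-{\rm depth}(v)$, with $v$ as a witnessing center; hence, once $h-{\rm depth}(v)\le 4\Delta-6$, the cat has localized the mouse up to distance $4\Delta-6$ and stops. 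If already $h\le 4\Delta-6$, then ${\rm rad}_G(M_1)={\rm rad}_G(V(T))=h\le 4\Delta-6$ and there is nothing to do.

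The heart of the argument is a per-level subroutine. Suppose we are at a time $i$ with $M_i\subseteq T_v$ and $h-{\rm depth}(v)>4\Delta-6$, and let $u_1,\ldots,u_\ell$, $\ell\le\Delta$, be the children of $v$. The subroutine has the cat probe $v,u_1,u_2,\ldots,u_\ell$ cyclically for a bounded number of rounds, i.e.\ for $O(\Delta)$ steps. It exploits two tree facts: first, if the cat holds its probe at a fixed vertex $p$, then --- $T$ being acyclic --- ${\rm dist}_T(p,\cdot)$ increases by exactly one along every edge oriented away from $p$, so each bit tells the cat whether the mouse moved toward $p$ (or stood still) or away from $p$; second, for a mouse lying strictly inside some $T_{u_k}$, the sign of the change of the distance to $v$, or to a fixed $u_j$, is governed solely by whether or not the mouse currently lies in the probed subtree. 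I would show that the bit pattern generated by one cyclic sweep (or a constant number of them) either forces the index $k$ of the subtree containing the mouse --- producing a later time $j$ with $M_j\subseteq T_{u_k}$, i.e.\ a descent to anchor $u_k$ --- or can only have arisen if the mouse stayed within distance $O(\Delta)$ of $v$ throughout the subroutine, in which case $M_j$ is contained in a ball of radius $O(\Delta)$ around $v$ and the cat stops. A useful point is that a mouse that is farther than $O(\Delta)$ from $v$ when the subroutine begins cannot reach $v$ during the $O(\Delta)$ steps, hence cannot leave $T_v$ upward; so the only consistent trajectories that escape $T_v$ during the subroutine are those already $O(\Delta)$-close to $v$ at its start, which is exactly the regime covered by the second outcome.

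Given the subroutine, the strategy is: run it; on the first outcome, move the anchor to $u_k$ and repeat; on the second outcome, halt. Since each subroutine costs $O(\Delta)$ steps and there are at most $h$ descents, the total time is $O(h\Delta)$, and the final $M$ has radius at most $4\Delta-6$. The hard part will be the subroutine itself. I expect the main work to consist of (i) choosing the probing schedule --- how often to revisit $v$ and in what order to visit $u_1,\ldots,u_\ell$ --- so that \emph{every} attainable bit pattern is either conclusive about the child or forces the mouse to stay $O(\Delta)$-close to $v$; (ii) bounding the inflation of $M_i$ over the $\Theta(\Delta)$ probing steps, during which the mouse moves once per step; and (iii), the subtlest point, ensuring that the cat's conclusion ``$m_j\in T_{u_k}$'' is logically \emph{forced} by the observed bits rather than merely consistent with them, so that it truly yields $M_j\subseteq T_{u_k}$. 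Pushing this bookkeeping through so that the worst case is exactly $4\Delta-6$ --- the largest anchor ball a mouse can guarantee by refusing to commit for one subroutine, once its $O(\Delta)$ free moves and the branching at $v$ and at $u_k$ are charged against it --- is where essentially all of the effort lies.
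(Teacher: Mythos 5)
Your skeleton (root at a center, keep an anchor $v$, spend $O(\Delta)$ probes per level to descend one level, charge at most $h$ descents for $O(h\Delta)$ total time) matches the paper's architecture, but the entire mathematical content of the theorem lives in the per-level subroutine, and you have only described what it should accomplish, not how. Your items (i)--(iii) are precisely the open issues, and two of them hide concrete obstacles. First, your probing schedule $v,u_1,\ldots,u_\ell$ places \emph{adjacent} vertices at consecutive times; for adjacent probes $c_i=u$, $c_{i+1}=w$ with $uw\in E(T)$, the outcome $d_{i+1}\le d_i$ is consistent with the mouse being on either side of the edge $uw$ (only $d_{i+1}>d_i$ is conclusive, forcing $m_{i+1}$ to the $u$-side), so a cyclic sweep does not deliver the clean one-bit-per-edge information your plan assumes. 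The paper's key observation is stated for probes at distance exactly two: if $c_i=u$ and $c_{i+1}=w$ have a common neighbor $z$, then \emph{either} outcome localizes $m_{i+1}$ to one side of an edge incident with $z$. Accordingly, the paper's subroutine is not a cyclic sweep but a two-phase scheme: a knockout tournament among the children of $r$ (probing two children at a time, eliminating one per comparison, leaving a single surviving candidate child $r^+$), followed by a second phase probing each grandchild $u$ against $r$ (again distance two, through $r^+$); the cat descends only when a comparison conclusively places the mouse on the $r^+$-side, and otherwise halts.

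Second, and relatedly, your worry (iii) --- that ``$m_j\in T_{u_k}$'' must be forced rather than merely consistent --- is resolved in the paper not by making one sweep conclusive, but by giving up on pinning down the child after the tournament: the surviving child $r^+$ is merely \emph{not excluded}, and the radius bound at termination is obtained by bookkeeping the \emph{times} at which each subtree $T_{u_j}$ (and each $T_{v_j}$ below $r^+$) was last known not to contain the mouse. A mouse found in $T_{u_j}$ at the final round $i$ must have crossed the edge $ru_j$ after its exclusion time $i_0+2j$, so ${\rm dist}_T(r,m_i)\le i-(i_0+2j)$; summing the at most $2(\Delta-1)+2(\Delta-1)$ rounds of one outer iteration over the four cases (mouse in some $T_{u_j}$, in some $T_{v_j}$, in $\{r,r^+\}$, or escaped above $r$) is exactly what produces the constant $4\Delta-6$. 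None of this bookkeeping, nor any derivation of the constant, appears in your proposal; you explicitly defer it (``where essentially all of the effort lies''). As written, the proposal is a plausible plan whose central lemma is unproved and whose stated probing schedule would need to be redesigned before the analysis could even begin.
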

Another natural type of graphs to consider are grids, that is, the Cartesian product $P_n\Box P_m$ of paths.
For these we show the following.
\begin{theorem}\label{theoremgrid}
The cat can localize the mouse up to distance $8$ within time $O(\log n)$ on the grid $P_n\Box P_n$.
\end{theorem}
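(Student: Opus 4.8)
The plan is to exploit the product structure: writing vertices as pairs $(x,y)\in\{1,\dots,n\}^2$, the distance is the $\ell^1$-distance $|x-x'|+|y-y'|$, and in one step the mouse changes exactly one coordinate by $\pm1$ or stays put. The cat will run two interleaved binary searches, one for the abscissa and one for the ordinate of the mouse, each search refining round by round an interval known to contain that coordinate, until both intervals have constant length, so that the mouse is confined to a box of constant size.

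First I would isolate the basic ``one bit of a binary search'' gadget. If at consecutive times $i$ and $i+1$ the cat probes $c_i=(a,b)$ and $c_{i+1}=(a+2,b)$ (differing only in the first coordinate, by $2$), then, because the ordinates agree, the $\ell^1$-decomposition gives $d_{i+1}-d_i=\big(|a+2-x_{i+1}|-|a-x_i|\big)+\big(|b-y_{i+1}|-|b-y_i|\big)$, and since the mouse moves by at most one, each summand is within $1$ of the corresponding quantity with $m_{i+1}$ replaced by $m_i$; a short case analysis then yields $d_{i+1}-d_i=V(x_i)+\eta$, where $V(x)=|a+2-x|-|a-x|$ equals $2$ for $x\le a$, $0$ for $x=a+1$, and $-2$ for $x\ge a+2$, and $\eta\in\{-1,0,1\}$. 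Hence $b_{i+1}=1$ forces $V(x_i)\le1$, i.e.\ $x_i\ge a+1$, while $b_{i+1}=0$ forces $x_i\le a+1$: the bit tells the cat, with no error, on which side of the threshold $a+1$ the abscissa $x_i$ lies. The gap $2$ between the two probes is exactly what makes this work: it creates a signal of size $2$ that the mouse's single step cannot flip. Exchanging the roles of the coordinates gives the analogous gadget for the ordinate.

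Next I would assemble the strategy. One may assume $n$ exceeds a small constant, since $P_n\Box P_n$ has radius at most $8$ whenever $n\le9$, so a single probe already gives ${\rm rad}_G(M_1)\le8$. For large $n$ the cat proceeds in rounds of a bounded number of steps, alternating ``$x$-rounds'' and ``$y$-rounds'', and maintains intervals $I^x\ni x$ and $I^y\ni y$ (valid at the current time), both initialized to $[1,n]$. In an $x$-round the cat applies the gadget with threshold equal to the midpoint of the current $I^x$; the resulting bit halves $I^x$ up to an additive constant, while the mouse's ordinate only drifts; the abscissa drifts by a bounded amount during this round and the following $y$-round, and symmetrically. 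Thus the lengths $L^x,L^y$ obey, over one pair of rounds, a recursion of the form $L\mapsto\lceil L/2\rceil+O(1)$, whose fixed point is a constant; after $O(\log n)$ rounds, hence within time $O(\log n)$, both intervals have constant length, so $M_i$ is contained in the box $I^x\times I^y$, and the centre of that box witnesses ${\rm rad}_G(M_i)\le8$ for the corresponding time $i$.

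The conceptual heart is the gadget — turning the very weak, comparison-only and two-sided feedback into a clean comparison of one mouse-coordinate against a cat-chosen threshold — together with the observation that keeping the bit clean forces the threshold to move only boundedly between rounds (the cat cannot ``teleport'' the threshold), which is why the two searches must be interleaved rather than run in sequence, and why an additive constant is lost per round. The remaining, more technical, difficulty is the bookkeeping of these constants: one must route the unavoidable drift carefully (each step moves only one coordinate, so the drift budget is shared between the two searches) and shift thresholds slightly off-centre near the boundary of the grid, so that the fixed point of the recursion stays within the claimed bound; I would handle this by tracking the semiperimeter $L^x+L^y$ as a single potential, which halves per cycle and absorbs only the total $\ell^1$-drift.
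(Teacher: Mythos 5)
Your proposal is essentially the paper's own argument: the same two-probes-at-$\ell^1$-distance-2 gadget turning one comparison bit into a clean half-plane test on a single coordinate, the same maintained box with a recursion $L\mapsto\lceil L/2\rceil+O(1)$ per cycle, and the same $O(\log n)$ iteration count. The only thing you defer that the paper carries out is the explicit constant bookkeeping (the paper gets $\partial x\mapsto\lceil\partial x/2\rceil+4$ and $\partial y\mapsto\lceil\partial y/2\rceil+3$, hence a $9\times 7$ box, and then one extra cycle with the coordinate roles swapped to reach an $8\times 8$ box and radius $8$), which is needed to land on the specific bound $8$ rather than an unspecified constant.
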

Both our results concern graphs of bounded maximum degree, 
and we pose the following general conjecture.

\begin{conjecture}\label{conjecture}
The cat can localize the mouse up to distance $O(\log n)$ on a connected graph $G$ of order $n$.
\end{conjecture}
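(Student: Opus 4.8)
\medskip
\noindent\emph{A plan of attack.}

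Since no bound on the time is demanded, the natural approach is to let the cat maintain the candidate set $M_i$ --- which it knows at all times --- and to design an adaptive strategy driving down a potential $\Phi(M_i)$ through a sequence of finite \emph{phases}. A convenient potential is $\log_2$ of the maximum number of vertices of $M_i$ that are pairwise at distance more than $C\log n$ for a suitable absolute constant $C$: once this reaches $1$, the set $M_i$ lies in a ball of radius $O(\log n)$ and the cat is done. Hence it would suffice to prove that whenever $\Phi(M_i)\geq 2$ there is a finite continuation of the cat's play after which, against \emph{every} mouse, $\Phi$ has strictly decreased; concatenating these continuations finishes after at most $\log_2 n$ phases.

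The engine of a phase should be a \emph{robust distance comparison}. Were the mouse stationary, probing $p$ and then $q$ in consecutive steps would reveal the sign of ${\rm dist}_G(p,m)-{\rm dist}_G(q,m)$ directly from the returned bit; repeating such probes over a window of steps and reading the extreme behaviour of the resulting bit string should let the cat localize the mouse, up to an error of order the window length, to the set of vertices whose distances to a few anchor vertices lie in prescribed ranges --- a discrete trilateration. Choosing the anchors along a recursive, centroid-type decomposition of $G$ (via a carefully chosen spanning tree or via BFS layers, in the spirit of the proofs of Theorem~\ref{theoremtree} and Theorem~\ref{theoremgrid}) would make each successful comparison cut the relevant part of $M_i$ by a constant factor, so that $O(\log n)$ comparisons suffice and the accumulated slack is exactly the $O(\log n)$ the conjecture permits. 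The gap between $d_G$ and the decomposition metric is what should force $\Theta(\log n)$ rather than $O(1)$ in general, so tools such as low-distortion or probabilistic tree embeddings are the expected source of the logarithmic factor.

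The delicate point is that between two useful probes the candidate set inflates, $M_{i+1}\subseteq N_G[M_i]$, and that the mouse --- knowing the cat's strategy, and so effectively moving \emph{after} seeing $c_{i+1}$ --- will choose each unit step to keep the returned bit consistent with as spread out a family of surviving trajectories as possible. The real content is therefore an adversary argument: the probe sequence of a phase must be designed so that, whatever unit steps the mouse takes, the bits obtained either genuinely split $M$ or are wasted only boundedly often, and so that this splitting outpaces the neighbourhood inflation. In the tree and grid cases this succeeds because the structure the cat chases along (a branch, a coordinate line) is essentially one-dimensional, which caps the inflation; for general $G$ the main obstacle is to identify the right one-dimensional ``coordinates'' to chase and to bound the inflation in terms of them. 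Finally, a matching lower bound --- a mouse strategy maintaining ${\rm rad}_G(M_i)=\Omega(\log n)$ forever, plausibly on high-girth or expander-like graphs where a single probe sees an almost tree-like neighbourhood in many directions at once --- appears to be needed both to confirm the exponent and to reveal which structural parameter the upper bound must exploit; reconciling the two is where the difficulty of the conjecture seems to lie.
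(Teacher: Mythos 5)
This statement is Conjecture \ref{conjecture}, which the paper explicitly leaves open; there is no proof of it in the paper to compare against, and your text is a plan of attack rather than a proof, so it cannot be accepted as one. The central step you would need --- that whenever $\Phi(M_i)\geq 2$ the cat has a finite continuation after which $\Phi$ has strictly decreased against \emph{every} mouse --- is essentially a restatement of the conjecture itself, and nothing in the sketch establishes it. The concrete failure point is the ``robust distance comparison'': the bit returned at time $i$ compares ${\rm dist}_G(c_i,m_i)$ with ${\rm dist}_G(c_{i-1},m_{i-1})$ for a mouse that has moved in between, so probing $p$ and then $q$ reveals the sign of ${\rm dist}_G(p,\cdot)-{\rm dist}_G(q,\cdot)$ only up to the additive drift of one unit per step. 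This is exactly why the key observation in the proof of Theorem \ref{theoremtree} requires $c_i$ and $c_{i+1}$ to have a common neighbour (so the two distance functions differ by exactly $2$ on the relevant side and the drift cannot flip the bit), and why the grid proof keeps consecutive probes at distance $2$. Distant anchors, as your trilateration would need, produce bits whose signal is drowned by the mouse's motion, and repeating the same pair of probes does not recover it, since the mouse can oscillate. You correctly identify the adversary argument as the ``real content,'' but that is precisely the part that is absent.

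You should also engage with the fact that the authors themselves doubt the statement: Section \ref{sec3} proposes, as a possible counterexample, the tree obtained by subdividing each edge of $K_{1,k}$ exactly $k-1$ times. There, distinguishing among the $k$ legs appears to cost on the order of $k$ pairwise comparisons near the centre, during which the mouse can travel distance $\Theta(k)=\Theta(\sqrt{n})\gg \log n$; a single vertex of high degree already defeats the ``constant-factor cut per comparison'' accounting in your phase argument, because the number of directions to eliminate is not logarithmic in $n$. Any viable approach must either handle this example or refute the conjecture on it; until then the proposal is a research programme, not a proof.
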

The reason for the $O(\log n)$ term in this conjecture
is that this many bits suffice to identify each vertex,
while the mouse may move this many units of distance 
in the time needed to acquire this many bits.

Our final result establishes a weak version of Conjecture \ref{conjecture}.
In order to facilitate the task for the cat, we slow down the mouse as follows.
For some positive integer $k$,
a game $((m_i)_{i\in \mathbb{N}},(c_i)_{i\in \mathbb{N}})$ on a graph $G$
is {\it $k$-slow} 
if $m_i=m_{i-1}$ for every integer $i$ at least $2$ 
such that $(i-1)\not\equiv 0\mod k$,
that is, the mouse can be considered to move with speed $1/k$.
We say that {\it the cat can localize a $k$-slow mouse up to distance $d$ on $G$}
if there is some strategy $\sigma$ 
such that for every $k$-slow game $((m_i)_{i\in \mathbb{N}},(c_i)_{i\in \mathbb{N}})$ on $G$
in which the cat follows the strategy $\sigma$,
there is some positive integer $i$ with ${\rm rad}_G(M_i)\leq d$.

\begin{theorem}\label{theoremslow}
If $\Delta$ is an integer at least $2$,
then the cat can localize a $4\Delta$-slow mouse up to distance $0$ on every connected graph $G$ of maximum degree at most $\Delta$.
\end{theorem}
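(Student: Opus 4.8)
The plan is a ``pointer chase'' that exploits the fact that a $4\Delta$-slow mouse stands still for long stretches. Put $k=4\Delta$ and call $B_j=\{jk+1,\dots,(j+1)k\}$ the $j$-th \emph{block}; in a $k$-slow game the mouse occupies a single vertex $\mu_j$ throughout $B_j$, and $\mu_{j+1}\in N_G[\mu_j]$. Within a block, every bit except the first compares two probes of the cat against the fixed vertex $\mu_j$; the first bit of each block, $b_{jk+1}$, also reflects the mouse's move between $\mu_{j-1}$ and $\mu_j$, and it will simply be ignored. So per block the cat has $k-1=4\Delta-1$ ``clean'' comparison bits available while the mouse sits still.

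I would first record a gadget: if the mouse is at $\mu$ and the cat probes $v$, then a neighbour $w$ of $v$, then $v$ again, the two resulting bits tell the cat whether ${\rm dist}_G(w,\mu)\le{\rm dist}_G(v,\mu)$ and whether ${\rm dist}_G(v,\mu)\le{\rm dist}_G(w,\mu)$; since these two inequalities cannot both fail, the pair of bits pins down the exact three-way comparison of ${\rm dist}_G(w,\mu)$ with ${\rm dist}_G(v,\mu)$. Chaining this, the probe sequence $v,n_1,v,n_2,v,\dots,n_r,v$ costs $2r+1$ probes and yields the exact comparison of ${\rm dist}_G(n_i,\mu)$ with ${\rm dist}_G(v,\mu)$ for every neighbour $n_i$ of $v$. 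The cat's action in $B_j$ would be: start at $p_0:=g$, its guess carried from the previous block ($g$ arbitrary for $B_0$); (i) run the gadget with base $p_0$ over all at most $\Delta$ neighbours of $p_0$ --- if none is strictly closer to $\mu_j$ than $p_0$, then $p_0=\mu_j$ (every vertex of a connected graph other than $\mu_j$ has a strictly closer neighbour) and the cat has located the mouse; otherwise pick a strictly closer neighbour $p_1$, so ${\rm dist}_G(p_1,\mu_j)={\rm dist}_G(p_0,\mu_j)-1$; (ii) run the gadget with base $p_1$ over the at most $\Delta-1$ neighbours of $p_1$ other than $p_0$ --- if none is strictly closer than $p_1$, then, as $p_0$ is strictly farther, $p_1=\mu_j$ and the cat is done; otherwise pick a strictly closer neighbour $p_2$ and set $g:=p_2$, so ${\rm dist}_G(g,\mu_j)={\rm dist}_G(p_0,\mu_j)-2$. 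Steps (i) and (ii) cost at most $(2\Delta+1)+(2\Delta-1)=4\Delta=k$ probes --- the probe that moves the base from $p_0$ to $p_1$ being shared between the two scans --- so they fit in one block, whose remaining time is padded by repeating the last probe.

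For the analysis I would track $\phi_j={\rm dist}_G(g,\mu_j)$, the distance from the cat's end-of-block guess to the mouse, noting $\phi_0\le{\rm dist}_G(v_0,\mu_0)<\infty$ for the arbitrary start $v_0$. Because $\mu_{j+1}\in N_G[\mu_j]$, the cat begins $B_{j+1}$ with its pointer at distance at most $\phi_j+1$ from $\mu_{j+1}$; if this distance is $0$ step (i) finishes, if it is $1$ then the unique strictly closer neighbour found in step (i) is $\mu_{j+1}$ itself so step (ii) finishes, and otherwise $\phi_{j+1}\le(\phi_j+1)-2=\phi_j-1$. Hence either the cat locates the mouse in finitely many blocks, or $\phi_j$ decreases by at least $1$ every block while remaining nonnegative, which is impossible past $\phi_0$ blocks; so the cat always succeeds within finite time, at which point the set $M_i$ of mouse positions consistent with the observed bits and with $k$-slowness is the singleton $\{\mu_i\}$ and ${\rm rad}_G(M_i)=0$. (Once $g=\mu_j$ the next starting distance is at most $1$, so the cat in fact keeps $M_i$ a singleton in all later blocks.)

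The main obstacle I anticipate is the tightness of the probe budget: there are exactly $4\Delta$ probes per block and two units of progress are needed to beat the mouse's one unit of drift, so both neighbourhood scans must be packed with no slack, and one must be careful that it is precisely the gadget's \emph{exact} three-way comparison that lets the cat \emph{certify} ``$p_0=\mu_j$'' or ``$p_1=\mu_j$'' --- a comparison revealing only ``$\le$ or $>$'' would leave a tie between ``reached the mouse'' and ``a neighbour is equally close'', breaking both the stopping rule and the singleton conclusion.
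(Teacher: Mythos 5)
Your proposal is correct and follows essentially the same strategy as the paper's proof: within each block of $4\Delta$ rounds the cat alternates the current base vertex with its neighbours to get exact three-way distance comparisons, either certifies that the base (or the closer neighbour found) is the mouse's position, or advances two steps closer, beating the mouse's one step of drift per block. The probe count $(2\Delta+1)+(2\Delta-1)=4\Delta$ and the decreasing-distance argument match the paper's proof exactly.
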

In Section \ref{sec2} we prove our results,
and in Section \ref{sec3} we present some open problems.

\section{Proofs}\label{sec2}

For all three of our results, 
we give simple proofs 
capturing essential observations.
For Theorems \ref{theoremtree} and \ref{theoremgrid},
minor improvements are possible
at the cost of tedious case analysis.

\begin{proof}[Proof of Theorem \ref{theoremtree}]
Let $T$, $\Delta$, and $h$ be as in the statement.
In order to express information gathered by the cat, 
we introduce the notation $u\stackrel{i}{\to}v$,
where $uv$ is an edge of $T$, and $i\in \mathbb{N}$,
meaning that $m_i$ belongs to the component of $T-uv$ that contains $v$. 

\begin{algorithm}[H]
\LinesNumbered\SetAlgoLined%\linesnumbered\SetLine 
\KwIn{The relative distance information ``$d_i\leq d_{i-1}$'' or ``$d_i>d_{i-1}$'' after specifying $c_i$ for some integer $i$ at least $2$.}
\KwOut{A statement of the form ``${\rm dist}_G(r,m_i)\leq 4\Delta-6$'' at the end of some round $i$.}
\Begin{
\LinesNumbered\SetAlgoLined
$r\leftarrow r_0$;
$I\leftarrow \emptyset$;
$i\leftarrow 1$\;
\While{$T_r$ has depth more than $4\Delta-6$\label{l3}}
{
Let $X$ be the set of children of $r$\;
\While{$|X|\geq 2$\label{l5}}
{
Let $u$ and $v$ be distinct vertices in $X$;
$c_i\leftarrow u$; 
$c_{i+1}\leftarrow v$\;
\eIf{$d_{i+1}\leq d_i$}
{
$X\leftarrow X\setminus \{ u\}$;
$I\leftarrow I\cup \left\{ u\stackrel{i+1}{\to}r\right\}$\;
}
{
$X\leftarrow X\setminus \{ v\}$;
$I\leftarrow I\cup \left\{ v\stackrel{i+1}{\to}r\right\}$\;
}
$i\leftarrow i+2$\;
}
Let $r^+$ be the unique element in $X$\label{l14}\;
Let $Y$ be the set of children of $r^+$\;
\texttt{replace}$\leftarrow$ \texttt{false}\;
\While{\texttt{replace}=\texttt{false} and $Y\not=\emptyset$\label{l17}}
{
Let $u$ be in $Y$;
$c_i\leftarrow u$;
$c_{i+1}\leftarrow r$\;
\eIf{$d_{i+1}\leq d_i$}
{
$Y\leftarrow Y\setminus \{ u\}$;
$I\leftarrow I\cup \left\{ u\stackrel{i+1}{\to}r^+\right\}$\;
}
{
\texttt{replace}$\leftarrow$ \texttt{true}\label{l22};
$I\leftarrow I\cup \left\{ r\stackrel{i+1}{\to}r^+\right\}$\;
}
$i\leftarrow i+2$\;
}
\eIf{\texttt{replace}=\texttt{true}}
{
$r\leftarrow r^+$\label{l27}\;
}
{
\Return{``${\rm dist}_G(r,m_i)\leq 4\Delta-6$''}; {\bf break}\label{l29}\;
}
}
\Return{``${\rm dist}_G(r,m_i)\leq 4\Delta-6$''}\label{l32}\;
}
\caption{{\sc Cat on a Tree}}\label{alg1}\end{algorithm}

\pagebreak

A key ingredient for the cat's strategy is the following simple observation:
If $u$ and $w$ are two vertices of $T$ with a common neighbor $v$,
and the cat chooses $c_i$ equal to $u$ and $c_{i+1}$ equal to $w$ 
for some $i\in \mathbb{N}$,
then 
$d_{i+1}\leq d_i$ implies $u\stackrel{i+1}{\to}v$, while
$d_{i+1}>d_i$ implies $w\stackrel{i+1}{\to}v$.
In fact, if $m_{i+1}$ does not belong to the component of $T-uv$ that contains $v$, then 
$d_{i+1}={\rm dist}_T(c_{i+1},m_{i+1})={\rm dist}_T(c_i,m_{i+1})+2\geq {\rm dist}_T(c_i,m_i)+1>d_i$,
which proves the first implication,
and the proof of the second implication is similar.

The cat chooses a center vertex $r_0$, and considers $T$ to be rooted in $r_0$. By the choice of $r_0$, the depth of $T$ is $h$.
For a vertex $r$ of $T$, let $T_r$ denote the subtree of $T$ rooted in $r$ that contains $r$ and all descendants of $r$.

The algorithm {\sc Cat on a Tree}, cf. Algorithm \ref{alg1}, specifies the strategy for the cat. 
Note that the game progresses one round 
whenever the cat specifies some $c_i$.
Throughout the game, the cat maintains a local root $r$ initially equal to $r_0$. The set $I$ contains the information of the form ``$u\stackrel{i}{\to}v$'' already gathered by the cat, and it is initially empty.
In each iteration of the outer {\bf while}-loop in line \ref{l3},
the cat either replaces $r$ with one of its children $r^+$ in line \ref{l27} or returns the statement ``${\rm dist}_G(r,m_i)\leq 4\Delta-6$'' in line \ref{l29} and terminates.
At the beginning of each iteration of the outer {\bf while}-loop,
we have 
\begin{eqnarray}\label{e1} 
\mbox{\it either $i=1$ or $\Big($ $i>1$ and $m_{i-1}$ belongs to $T_r$$\Big)$.}
\end{eqnarray}
The second case is expressed by the element 
$r^-\stackrel{i-1}{\to}r$ of $I$
added in line \ref{l22}, where $r^-$ is the parent of $r$.
For the first iteration, (\ref{e1}) is trivial, and for later iterations, 
we will show it by an inductive argument.
The outer {\bf while}-loop is no longer performed
if the depth of $T_r$ is at most $4\Delta-6$,
which, by (\ref{e1}), implies that the output in line \ref{l32} is correct.

Now, we consider an iteration of the outer {\bf while}-loop,
and assume that (\ref{e1}) holds at its beginning.
Let $i_0+1$ be the value of $i$ at the beginning of that iteration,
that is, $i_0=0$ for the very first iteration.
The first inner {\bf while}-loop in line \ref{l5} exploits the key observation made above.
It follows that there is an ordering $u_1,\ldots,u_x$ of the children of $r$ in $T$ such that, at the end of the first inner {\bf while}-loop, 
\begin{eqnarray}\label{e2}
\mbox{\it $u_j\stackrel{i_0+2j}{\to}r\in I$ for every $j$ in $[x-1]$},
\end{eqnarray}
and the vertex $r^+$ in line \ref{l14} equals $u_x$.
Within the second inner {\bf while}-loop in line \ref{l17},
the cat may replace $r$ with $r^+$ in line \ref{l27},
in which case it concludes 
the current iteration of the outer {\bf while}-loop.
Note that this happens exactly if 
\texttt{replace} is set to \texttt{true},
and $r\stackrel{i+1}{\to}r^+$ is added to $I$ in line \ref{l22},
that is, (\ref{e1}) holds at the end of the considered iteration of the outer {\bf while}-loop,
which concludes the inductive proof of (\ref{e1}).

If \texttt{replace} is never set to \texttt{true} during the second inner {\bf while}-loop, then, similarly as for the first inner {\bf while}-loop,
it follows that there is an ordering $v_1,\ldots,v_y$ of the children of $r^+$ in $T$ such that 
at the end of the second inner {\bf while}-loop, 
the set $I$ contains 
\begin{eqnarray}\label{e3}
\mbox{\it $v_j\stackrel{i_0+2(x-1)+2j}{\to}r^+$ for every $j$ in $[y]$.}
\end{eqnarray}
In this case, the cat returns the statement
``${\rm dist}_G(r,m_i)\leq 4\Delta-6$'' in line \ref{l29},
where $i=i_0+2x+2y-2$,
and terminates the computation.
To show the correctness of {\sc Cat on a Tree},
we need to argue that this statement is correct.
Indeed,
if $m_i$ belongs to $T_{u_j}$ for some $j$ in $[x-1]$,
then, by (\ref{e2}),
$u_j\stackrel{i_0+2j}{\to}r\in I$,
which implies that 
\begin{eqnarray*}
{\rm dist}_T(r,m_i)
&\leq & i-(i_0+2j)\\
&=& (i_0+2x+2y-2)-(i_0+2j)\\
&\stackrel{j\geq 1}{\leq}& (i_0+2x+2y-2)-(i_0+2)\\
&=& 2x+2y-4\\
&\leq & 2\Delta+2(\Delta-1)-4\\
&=& 4\Delta-6,
\end{eqnarray*}
where we used $x\leq \Delta$ and $y\leq \Delta-1$.

If $m_i$ belongs to $T_{v_j}$ for some $j$ in $[y]$,
then, by (\ref{e3}),
$v_j\stackrel{i_0+2(x-1)+2j}{\to}r^+\in I$,
which implies that 
\begin{eqnarray*}
{\rm dist}_T(r,m_i)
&\leq & {\rm dist}_T(r,r^+)+i-(i_0+2x+2j-2)\\
&=& 1+(i_0+2x+2y-2)-(i_0+2x+2j-2)\\
&\stackrel{j\geq 1}{\leq}&  1+(i_0+2x+2y-2)-(i_0+2x)\\
&=& 2y-1\\
&\leq & 2\Delta-3\\
&\leq & 4\Delta-6,
\end{eqnarray*}
where we used $y\leq \Delta-1$.

If $m_i\in \{ r,r^+\}$, then 
${\rm dist}_T(r,m_i)\leq 1\leq 4\Delta-6$,
where we used $\Delta\geq 2$.

Finally, if $m_i$ does not belong to $T_r$,
then, by (\ref{e1}),
$r^-\stackrel{i_0}{\to}r\in I$,
which implies that 
\begin{eqnarray*}
{\rm dist}_T(r,m_i)
&\leq & i-i_0\\
&=& (i_0+2x+2y-2)-i_0\\
&=& 2x+2y-2\\
&\leq & 2(\Delta-1)+2(\Delta-1)-2\\
&=& 4\Delta-6,
\end{eqnarray*}
where we used $x\leq \Delta-1$ and $y\leq \Delta-1$.
Note that the existence of $r^-$ implies that $r$ has at most $\Delta-1$ children.
Altogether, the correctness of {\sc Cat on a Tree} follows,
and we consider its running time.

The outer {\bf while}-loop is executed at most 
$\max\{ 0,h-(4\Delta-6)\}$ times.
If, in some execution of the outer {\bf while}-loop, 
$r$ has $x$ children, and $r^+$ has $y$ children, 
then the first inner {\bf while}-loop is executed $x-1\leq \Delta-1$ times,
while the second inner {\bf while}-loop is executed at most $y\leq \Delta-1$ times.
Since in each of these executions of the inner {\bf while}-loops,
the discrete time proceeds exactly two units, 
{\sc Cat on a Tree} terminates in some round $i$
with $i\leq 2\max\{ 0,h-(4\Delta-6)\}((\Delta-1)+(\Delta-1))=O(h\Delta)$,
which completes the proof. 
\end{proof}
For $\Delta\geq 3$, with some more case analysis, it is possible to improve $4\Delta-6$ to $4\Delta-7$.
For $\Delta=2$, that is, on a path, say of order $n$, the cat can localize the mouse up to distance $2$ within time $O(\log n)$.
The corresponding strategy is similar to the strategy used in the following proof.

For integers $i$ and $j$,
let $[i,j]$ equal $\{ k\in \mathbb{Z}:i\leq k\leq j\}$.

\begin{proof}[Proof of Theorem \ref{theoremgrid}]
Let $[n]\times [n]$ denote the vertex set of $P_n\Box P_n$, where two vertices $(x,y)$ and $(x',y')$ are adjacent if and only if $|x-x'|+|y-y'|=1$. 

Suppose that 
\begin{eqnarray}\label{e4}
M_i\subseteq \Big[x_i,x_i+\partial x_i\Big]\times \Big[y_i,y_i+\partial y_i\Big]\subseteq \Big[n\Big]\times \Big[n\Big]
\end{eqnarray}
for some positive integers $i$, $x_i$, $\partial x_i$, $y_i$, and $\partial y_i$. For $i=1$, for instance, $M_1\subseteq [1,1+(n-1)]\times [1,1+(n-1)]=[n]\times [n]$. 
If $\partial x_i$ is even, then let $p_x=\frac{\partial x_i}{2}-1$, 
and
if $\partial x_i$ is odd, then let $p_x=\frac{\partial x_i}{2}-\frac{1}{2}$.
Let $p_y$ be defined similarly using $\partial y_i$.
Now, the cat chooses 
\begin{eqnarray*}
c_{i+1}&=&(x_i+p_x,y_i+p_y),\\ 
c_{i+2}&=&(x_i+p_x+2,y_i+p_y)\mbox{, and}\\
c_{i+3}&=&(x_i+p_x+2,y_i+p_y+2).
\end{eqnarray*}
Arguing similarly as for the key observation exploited in the proof of Theorem \ref{theoremtree}, it follows that 
\begin{eqnarray}\label{e5}
M_{i+3} &\subseteq &
\Big[x_{i+3},x_{i+3}+\partial x_{i+3}\Big]\times \Big[y_{i+3},y_{i+3}+\partial y_{i+3}\Big]\subseteq \Big[n\Big]\times \Big[n\Big],\mbox{ where}\label{e5}\\
\partial x_{i+3} & \leq & \left\lceil\frac{\partial x_i}{2}\right\rceil+4\label{e6},\\
\partial y_{i+3} & \leq & \left\lceil\frac{\partial y_i}{2}\right\rceil+3,\mbox{ and}\label{e7}
\end{eqnarray}
$x_{i+3}$,
$\partial x_{i+3}$,
$y_{i+3}$, and
$\partial y_{i+3}$ are positive integers.
Indeed, suppose that $\partial x_i$ is even, $\partial y_i$ is odd,
$d_{i+2}\leq d_{i+1}$, and $d_{i+3}>d_{i+2}$. In this case, it follows that 
\begin{eqnarray*}
m_i & \in & \Big[x_i,x_i+\partial x_i\Big]\times \Big[y_i,y_i+\partial y_i\Big],\\
m_{i+1} & \in & \Big[x_i-1,x_i+\partial x_i+1\Big]\times \Big[y_i-1,y_i+\partial y_i+1\Big],\\
m_{i+2} & \in & \Big[x_i+\frac{\partial x_i}{2},x_i+\partial x_i+2\Big]\times \Big[y_i-2,y_i+\partial y_i+2\Big],\mbox{ and}\\
m_{i+3} & \in & \Big[x_i+\frac{\partial x_i}{2}-1,x_i+\partial x_i+3\Big]\times \Big[y_i-3,y_i+\frac{\partial y_i}{2}+\frac{1}{2}\Big].
\end{eqnarray*}
See Figure \ref{fig1} for an illustration.
\begin{figure}[H]
\begin{center}
%TeXCAD Picture [1.pic]. Options:
%\grade{\on}
%\emlines{\off}
%\epic{\off}
%\beziermacro{\on}
%\reduce{\on}
%\snapping{\on}
%\pvinsert{% Your \input, \def, etc. here}
%\quality{8.000}
%\graddiff{0.005}
%\snapasp{1}
%\zoom{22.6274}
\unitlength 2mm % = 2.845pt
\linethickness{0.4pt}
\ifx\plotpoint\undefined\newsavebox{\plotpoint}\fi % GNUPLOT compatibility
\begin{picture}(24,18)(0,0)
\put(0,4){\framebox(20,14)[cc]{}}
\put(0,6){\line(1,0){20}}
\put(0,8){\line(1,0){20}}
\put(0,10){\line(1,0){20}}
\put(0,12){\line(1,0){20}}
\put(0,14){\line(1,0){20}}
\put(0,16){\line(1,0){20}}
\put(2,18){\line(0,-1){14}}
\put(4,18){\line(0,-1){14}}
\put(6,18){\line(0,-1){14}}
\put(8,18){\line(0,-1){14}}
\put(10,18){\line(0,-1){14}}
\put(12,18){\line(0,-1){14}}
\put(14,18){\line(0,-1){14}}
\put(16,18){\line(0,-1){14}}
\put(18,18){\line(0,-1){14}}
\put(20,18){\line(0,-1){14}}
\put(8,10){\circle*{0.5}}
\put(12,10){\circle*{0.5}}
\put(12,14){\circle*{0.5}}
\put(10,4){\line(0,-1){4}}
\put(10,2){\vector(1,0){3}}
\put(16,2){\makebox(0,0)[cc]{$m_{i+2}$}}
\put(20,12){\line(1,0){4}}
\put(22,12){\vector(0,-1){3}}
\put(23,7){\makebox(0,0)[cc]{$m_{i+3}$}}
\put(7,9){\makebox(0,0)[cc]{$1$}}
\put(13,9){\makebox(0,0)[cc]{$2$}}
\put(13,15){\makebox(0,0)[cc]{$3$}}
\end{picture}
\end{center}
\caption{An illustration for the case 
$\partial x_i=10$, $\partial y_i=7$, $d_{i+2}\leq d_{i+1}$, and $d_{i+3}>d_{i+2}$.}\label{fig1}
\end{figure}
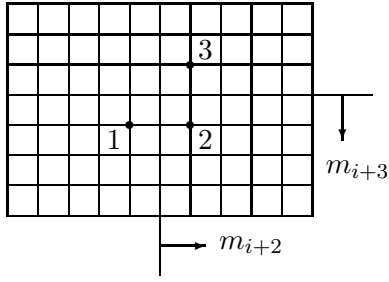
We obtain 
\begin{eqnarray*}
\partial x_{i+3}&=&\Big(x_i+\partial x_i+3\Big)-\left(x_i+\frac{\partial x_i}{2}-1\right)=\frac{\partial x_i}{2}+4\mbox{ and}\\
\partial y_{i+3}&=&\left(y_i+\frac{\partial y_i}{2}+\frac{1}{2}\right)-\Big(y_i-3\Big)=\left\lceil\frac{\partial y_i}{2}\right\rceil+3,
\end{eqnarray*}
and (\ref{e5}), (\ref{e6}), and (\ref{e7}) follow.
All remaining cases are similar; we leave the details to the reader.

Note that 
$\left\lceil\frac{x}{2}\right\rceil+4<x$ for $x\geq 10$,
and that 
$\left\lceil\frac{y}{2}\right\rceil+3<y$ for $y\geq 8$.
Therefore, iteratively choosing the vertices as above, the cat can ensure that (\ref{e4}) holds 
with $\partial x_i\leq 9$ and $\partial y_i\leq 7$ for some $i=O(\log n)$.
If either $\partial x_i<9$ or $\partial y_i<7$, then we obtain ${\rm rad}_G(M_i)\leq 8$.
If $\partial x_i=9$ and $\partial y_i=7$, 
then choosing $c_{i+1}$, $c_{i+2}$, and $c_{i+3}$ with the roles of the $x$- and $y$-coordinates exchanged,
we obtain,
using (\ref{e6}) and (\ref{e7}),
that (\ref{e5}) holds with 
$\partial x_{i+3}\leq \left\lceil\frac{9}{2}\right\rceil+3=8$ and 
$\partial y_{i+3}\leq \left\lceil\frac{7}{2}\right\rceil+4=8$,
which implies ${\rm rad}_G(M_{i+3})\leq 8$.
This completes the proof. 
\end{proof}

\begin{proof}[Proof of Theorem \ref{theoremslow}]
We show that
until the cat has not identified the position of the mouse in some round, 
for every positive integer $i$ with $i\equiv 1\mod 4\Delta$, 
the cat can choose a vertex $r_i$ 
such that it can 
\begin{itemize}
\item either identify the vertex $m_{i+4\Delta-1}$
in round $i+4\Delta-1$
\item or choose a vertex $r_{i+4\Delta}$ 
with ${\rm dist}_G(r_{i+4\Delta},m_{i+4\Delta})<{\rm dist}_G(r_i,m_i)$, 
\end{itemize}
which clearly implies the desired result.

Let $r_1$ be any vertex of $G$.
Now, suppose that the positive integer $i$ 
is such that $i\equiv 1 \mod 4\Delta$,
and that $r_i$ has been chosen.
Note that $m_i=m_{i+1}=\ldots=m_{i+4\Delta-1}$.
The cat chooses
$$\left(c_i,c_{i+1},\ldots,c_{i+2d}\right)
=\left(r_i,u_1,r_i,u_2,r_i,\ldots,r_i,u_d,r_i\right),$$
where $N_G(r_i)=\{ u_1,\ldots,u_d\}$ for some $d\leq \Delta$.

If 
${\rm dist}_G(u_j,m_i)={\rm dist}_G(c_{i+2j-1},m_i)
>{\rm dist}_G(c_{i+2j-2},m_i)={\rm dist}_G(r_i,m_i)$
for every $j\in [d]$,
then $m_{i+4\Delta-1}=m_i=r_i$, 
that is, the cat identifies $m_{i+4\Delta-1}$ in round $i+4\Delta-1$.
Hence, we may assume that 
${\rm dist}_G(r_i,m_i)=
{\rm dist}_G(c_{i+2j},m_i)
>{\rm dist}_G(c_{i+2j-1},m_i)=
{\rm dist}_G(u_j,m_i)$ for some $j\in [d]$.
In this case, the cat chooses
$$\left(c_{i+2d+1},c_{i+2d+2},\ldots,c_{i+2d+2d'+1}\right)
=\left(u_j,v_1,u_j,v_2,u_j,\ldots,u_j,v_{d'},u_j\right),$$
where $N_G(u_j)=\{ r_i,v_1,\ldots,v_{d'}\}$
for some $d'\leq \Delta-1$.

If 
${\rm dist}_G(v_\ell,m_i)={\rm dist}_G(c_{i+2d+2\ell},m_i)
>{\rm dist}_G(c_{i+2d+2\ell-1},m_i)={\rm dist}_G(u_j,m_i)$
for every $\ell\in [d']$,
then $m_{i+4\Delta-1}=m_i=u_j$.
Hence, we may assume that 
${\rm dist}_G(u_j,m_i)=
{\rm dist}_G(c_{i+2d+2\ell+1},m_i)
>
{\rm dist}_G(c_{i+2d+2\ell},m_i)
={\rm dist}_G(v_\ell,m_i)$
for some $\ell\in [d]$,
which implies
${\rm dist}_G(v_\ell,m_i)={\rm dist}_G(u_j,m_i)-1={\rm dist}_G(r_i,m_i)-2$.
In this case, the cat chooses $r_{i+4\Delta}$ equal to $v_\ell$.
Since $m_{i+4\Delta}\in N_G[m_{i+4\Delta-1}]=N_G[m_i]$,
we obtain 
${\rm dist}_G(r_{i+4\Delta},m_{i+4\Delta})
\leq{\rm dist}_G(r_{i+4\Delta},m_{i+4\Delta-1})+1
={\rm dist}_G(v_\ell,m_i)+1
={\rm dist}_G(r_i,m_i)-1
$,
which completes the proof.
\end{proof}

\section{Conclusion}\label{sec3}

We collect some problems for further research.
It would be interesting to obtain tight bounds for the problems considered in Theorem \ref{theoremtree} and Theorem \ref{theoremgrid}.
In this context, lower bounds on the minimum distance up to which the mouse can be located would be useful.
An interesting special graph, and possible counterexample to Conjecture \ref{conjecture}, seems to be the tree 
that arises by subdividing each edge of the star $K_{1,k}$
exactly $k-1$ times.
One may consider different slowness conditions such as, 
for instance,
``$(m_i\not=m_{i-1})\wedge (m_j\not=m_{j-1})\wedge(j>i)\Rightarrow j-i\geq k$'' for some integer $k$ corresponding to the inverse maximum speed of the mouse.
For a proof of Conjecture \ref{conjecture},
it might be useful that, 
for every graph $G$ of maximum degree at most $\Delta$, 
and every set $M$ of vertices of $G$,
there is an edge $uv$ of $G$ 
for which the two sets
$\{ m\in M:{\rm dist}_G(u,m)>{\rm dist}_G(v,m)\}$
and
$\{ m\in M:{\rm dist}_G(u,m)<{\rm dist}_G(v,m)\}$
both contain at least $\frac{|M|-1}{\Delta}$ vertices.
%Let the vertex $u$ of $G$ minimize $\sum\limits_{m\in M}{\rm dist}_G(u,m)$.
%For every $m$ in $M$, fix a shortest $u$-$m$ path in $G$.
%By the pigeonhole principle, there is a neighbor $v$ of $u$ such that at least $\frac{|M|-1}{\Delta}$ of these paths pass through $v$, which implies (\ref{e8}).
%Since ${\rm dist}_G(v,m)-{\rm dist}_G(u,m)\in \{ -1,0,1\}$ for every vertex $m$ in $M$, the difference 
%$\sum\limits_{m\in M}{\rm dist}_G(v,m)-\sum\limits_{m\in M}{\rm dist}_G(u,m)$
%equals
%$$\Big|\Big\{ m\in M:{\rm dist}_G(u,m)<{\rm dist}_G(v,m)\Big\}\Big|-\Big|\Big\{ m\in M:{\rm dist}_G(u,m)>{\rm dist}_G(v,m)\Big\}\Big|.$$
%Since this difference is non-negative by the choice of $u$, (\ref{e8}) implies (\ref{e9}). 
%\end{proof}

\end{document}